\def\@settitle{%
	\vspace*{10pt}
	\begin{flushleft}%
		\LARGE\bfseries
		\strut\@title\strut
	\end{flushleft}%
}
\def\@setauthors{%
	\begingroup
	\def\thanks{\protect\thanks@warning}%
	\trivlist
	\raggedright
	\large \@topsep28\p@\relax
	\advance\@topsep by -\baselineskip
	\item\relax
	\author@andify\authors
	\def\\{\protect\linebreak}%
	\authors
	\ifx\@empty\contribs
	\else
	,\penalty-3 \space \@setcontribs
	\@closetoccontribs
	\fi
	\normalfont
	\endtrivlist
	\endgroup
}
\def\@setaddresses{\par
	\nobreak \begingroup
	\small\raggedright
	\def\author##1{\nobreak\addvspace\smallskipamount}%
	\def\\{\unskip, \ignorespaces}%
	\interlinepenalty\@M
	\def\address##1##2{\begingroup
		Address:
		\@ifnotempty{##1}{(\ignorespaces##1\unskip) }%
		{\ignorespaces##2}\par\endgroup}%
	\def\curraddr##1##2{\begingroup
		\@ifnotempty{##2}{\nobreak\noindent\curraddrname
			\@ifnotempty{##1}{, \ignorespaces##1\unskip}\/:\space
			##2\par}\endgroup}%
	\def\email##1##2{\begingroup
		\@ifnotempty{##2}{\nobreak\noindent E-mail address%
			\@ifnotempty{##1}{\par\ignorespaces##1\unskip}\/:\space
			\ttfamily##2\par}\endgroup}%
	\def\urladdr##1##2{\begingroup
		\def~{\char`\~}%
		\@ifnotempty{##2}{\nobreak\noindent\urladdrname
			\@ifnotempty{##1}{, \ignorespaces##1\unskip}\/:\space
			\ttfamily##2\par}\endgroup}%
	\addresses
	\endgroup
	\global\let\addresses=\@empty
}
\def\@setabstracta{%
	\ifvoid\abstractbox
	\else
	\skip@20pt \advance\skip@-\lastskip
	\advance\skip@-\baselineskip \vskip\skip@
	\box\abstractbox
	\prevdepth\z@ 
	\vskip-22pt
	\fi
}
\renewenvironment{abstract}{%
	\ifx\maketitle\relax
	\ClassWarning{\@classname}{Abstract should precede
		\protect\maketitle\space in AMS document classes; reported}%
	\fi
	\global\setbox\abstractbox=\vtop \bgroup
	\normalfont\small
	\list{}{\labelwidth\z@
		\leftmargin0pc \rightmargin\leftmargin
		\listparindent\normalparindent \itemindent\z@
		\parsep\z@ \@plus\p@
		
	}%
	\item[\hskip\labelsep\bfseries\abstractname.]%
}{%
	\endlist\egroup
	\ifx\@setabstract\relax \@setabstracta \fi
}
\def\ps@headings{\ps@empty
	\def\@evenhead{%
		\setTrue{runhead}%
		\normalfont\scriptsize
		\rlap{\thepage}\hfill
		\def\thanks{\protect\thanks@warning}%
		\leftmark{}{}}%
	\def\@oddhead{%
		\setTrue{runhead}%
		\normalfont\scriptsize
		\def\thanks{\protect\thanks@warning}%
		\rightmark{}{}\hfill \llap{\thepage}}%
	\let\@mkboth\markboth
}\ps@headings
\def\section{\@startsection{section}{1}%
	\z@{-1.4\linespacing\@plus-.5\linespacing}{.8\linespacing}%
	{\normalfont\bfseries\Large}}
\def\subsection{\@startsection{subsection}{2}%
	\z@{-.8\linespacing\@plus-.3\linespacing}{.5\linespacing\@plus.2\linespacing}%
	{\normalfont\bfseries\large}}
\def\subsubsection{\@startsection{subsubsection}{3}%
	\z@{.7\linespacing\@plus.2\linespacing}{-1.5ex}%
	{\normalfont\bfseries}}
\def\@secnumfont{\bfseries}
\renewcommand\contentsnamefont{\bfseries\large}
\def\@starttoc#1#2{\begingroup
	\setTrue{#1}%
	\par\removelastskip\vskip\z@skip
	\@startsection{}\@M\z@{\linespacing\@plus\linespacing}%
	{.5\linespacing}{
		\contentsnamefont}{#2}%
	\ifx\contentsname#2%
	\else \addcontentsline{toc}{section}{#2}\fi
	\makeatletter
	\@input{\jobname.#1}%
	\if@filesw
	\@xp\newwrite\csname tf@#1\endcsname
	\immediate\@xp\openout\csname tf@#1\endcsname \jobname.#1\relax
	\fi
	\global\@nobreakfalse \endgroup
	\addvspace{32\p@\@plus14\p@}%
	\let\tableofcontents\relax
}
\def\contentsname{Contents}
\def\l@section{\@tocline{1}{.5ex}{0mm}{5pc}{}}
\def\l@subsection{\@tocline{2}{0pt}{2em}{5pc}{}}
\newtheorem{theorem}{Theorem}
\newtheorem*{theorem*}{Theorem}
\newtheorem*{corollary*}{}
\newtheorem{proposition}[theorem]{Proposition}
\newtheorem{corollary}[theorem]{Corollary}
\newtheorem*{conjecture*}{Conjecture}
\newtheorem{question}{Question}
\theoremstyle{definition}
\newenvironment{definition}
{\pushQED{\qed}\defin}
{\popQED\enddefin}
\providecommand{\proofname}{Proof}
\begin{document}
	
\vspace*{-40pt}
\title{On knots that divide ribbon knotted surfaces}
	
\author{Hans U.\ Boden}
\email{\href{mailto:boden@mcmaster.ca}{boden@mcmaster.ca}}
\address{Mathematics \& Statistics, McMaster University, Hamilton, Ontario, Canada}

\author{Ceyhun Elmacioglu}
\email{\href{mailto:elmacioglu.ceyhun@gmail.com}{elmacioglu.ceyhun@gmail.com}}

\author{Anshul Guha}
\email{\href{mailto:anshul.guha@yale.edu}{anshul.guha@yale.edu}}
\address{Department of Mathematics, Yale University, New Haven, CT, United States}

\author{Homayun Karimi}
\email{\href{mailto:homayun.karimi@gmail.com}{homayun.karimi@gmail.com}}
\address{Mathematics \& Statistics, McMaster University, Hamilton, Ontario, Canada}

\author{William Rushworth}
\email{\href{mailto:william.rushworth@ncl.ac.uk}{william.rushworth@ncl.ac.uk}}
\address{School of Mathematics, Statistics and Physics, Newcastle University, United Kingdom}

\author{Yun-chi Tang}
\email{\href{mailto:yunchi.tang@mail.utoronto.ca}{yunchi.tang@mail.utoronto.ca}}
\address{Department of Mathematics, University of Toronto, Ontario, Canada}

\author{Bryan Wang Peng Jun}
\email{\href{mailto:bryanwangpengjun@hotmail.com}{bryanwangpengjun@hotmail.com}}
\address{Department of Mathematics, National University of Singapore, Singapore}
	
\def\subjclassname{\textup{2020} Mathematics Subject Classification}
\expandafter\let\csname subjclassname@1991\endcsname=\subjclassname
\expandafter\let\csname subjclassname@2000\endcsname=\subjclassname
\subjclass{57K10, 57K45}
	
\keywords{2-knot, knotted surface, slice, ribbon, doubly slice}
	
\begin{abstract}
We define a knot to be \emph{half ribbon} if it is the cross-section of a ribbon \(2\)-knot, and observe that ribbon implies half ribbon implies slice. We introduce the \emph{half ribbon genus} of a knot \( K \), the minimum genus of a ribbon knotted surface of which \( K \) is a cross-section. We compute this genus for all prime knots up to \( 12 \) crossings, and many \(13\)-crossing knots. The same approach yields new computations of the double slice genus. We also introduce the \emph{half fusion number} of a knot \( K \), that measures the complexity of ribbon \(2\)-knots of which \( K \) is a cross-section. We show that it is bounded below by the Levine-Tristram signatures, and differs from the standard fusion number by an arbitrarily large amount.
\end{abstract}
	
\maketitle

\section{Introduction}\label{Sec:intro}
Knots in \( S^3 \) naturally appear as equatorial cross-sections of knotted surfaces in \( S^4 \). In this paper we restrict to \emph{ribbon} knotted surfaces, those that are particularly simple Morse-theoretically (see \Cref{Def:ribbonsknot}). We define a knot \( K \) to be \emph{half ribbon} if it is the cross-section of a ribbon knotted \(2\)-sphere in \( S^4 \). Ribbon knots are half ribbon (see \Cref{Prop:rihr}), but the converse is an open question. Half ribbon knots are slice, but the converse is also an open question (as not every knotted \(2\)-sphere is ribbon \cite{Yajima1964}).

Answering these questions would resolve the slice-ribbon conjecture, that posits that every slice knot is ribbon \cite{Fox62}. Despite much effort, and results in both directions, it remains open (see, for example, \cite{Lisca2007,Greene2011,Lecuona2012,Gompf2010,Abe2016}). The notion of half ribbon arises naturally by splitting the slice-ribbon conjecture into two questions: (i) if \( K \) is a cross-section of a \(2\)-knot is it the cross-section of a ribbon \(2\)-knot? and (ii) if \( K \) is a cross-section of a ribbon \(2\)-knot does it possess a ribbon disc?

We also introduce the \emph{half ribbon genus}, \( g_{hr} ( K ) \), of a knot \( K \): the minimum genus of a ribbon knotted surface of which \( K \) is a cross-section. The half ribbon genus is an intermediate between the slice genus, \( g_4 ( K ) \), and double slice genus, \( g_{ds} ( K ) \), in that
\begin{equation}\tag{Prop.~\ref{Prop:bounds}}
	2 g_4 ( K ) \leq g_{hr} ( K ) \leq g_{ds} ( K ).
\end{equation}
It follows that a knot of half ribbon genus one would be a counterexample to the slice-ribbon conjecture. More generally, a knot of odd half ribbon genus would have distinct slice and ribbon genera (see \Cref{Q:srg}).

We determine the half ribbon genus of every prime knot of up to \(12\) crossings to be even. In addition, we calculate \(8\) of the \(65\) previously unknown double slice genera of such knots. The following result is proved in \Cref{Sec:determining}.

\begin{theorem}\label{Thm:comp}
	Let \( K \) be a prime knot with up to \(12\) crossings. Then \( g_{hr} ( K ) = 2 g_4 ( K ) \). If \( K \) is one of the following knots then \( g_{ds} ( K ) = 2 g_4 ( K ) \) also:
	\begin{center}
		\( 9_{37} \), \( 10_{74} \), \( 11n148 \), \( 12a554 \), \( 12a896 \), \( 12a921 \), \( 12a1050 \), \( 12n554 \).
	\end{center}
\end{theorem}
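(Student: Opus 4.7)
The plan is to establish the matching upper bound $g_{hr}(K) \leq 2g_4(K)$, since the lower bound $2g_4(K) \leq g_{hr}(K)$ is already furnished by \Cref{Prop:bounds}. The central construction is \emph{doubling}: given a ribbon surface $F \subset B^4$ with $\partial F = K$ of genus $g$, its mirror double $\Sigma_K := F \cup_K \overline{F} \subset S^4 = B^4 \cup_{S^3} \overline{B^4}$ is a closed ribbon knotted surface of genus $2g$ whose equatorial cross-section is $K$. Consequently, to prove $g_{hr}(K) \leq 2g_4(K)$ it suffices to exhibit, for each prime $K$ with $c(K) \leq 12$, a ribbon surface in $B^4$ bounding $K$ of genus exactly $g_4(K)$.

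The bulk of the argument is therefore the tabulation step: for every prime knot with at most $12$ crossings, produce a concrete ribbon surface realising $g_4(K)$. For slice knots this amounts to exhibiting a ribbon disc, which has been verified for every slice prime knot of up to $12$ crossings in the KnotInfo tabulations. For non-slice knots I would parse the genus-$g_4(K)$ slice surfaces recorded in KnotInfo and reinterpret them as ribbon surfaces, either by massaging a band presentation of $K$ into one whose fusion/saddle moves yield the desired genus without any local maxima, or by taking a minimal-genus Seifert surface and performing compressions in $B^4$ in a ribbon-controlled way. Combining this catalogue with the doubling construction yields $g_{hr}(K) \leq 2g_4(K)$, and hence equality for every prime $K$ with at most $12$ crossings.

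For the eight exceptional knots, I would upgrade $\Sigma_K$ to an \emph{unknotted} closed surface in $S^4$, which would give $g_{ds}(K) \leq 2g_4(K)$; the reverse inequality again follows from \Cref{Prop:bounds} combined with the first part of the theorem. The key is to choose the ribbon surface $F$ so that the doubled surface $\Sigma_K$ extends to a handlebody on each side of $S^4$, equivalently so that $\pi_1(S^4 \setminus \Sigma_K)$ is free of rank $2g_4(K)$. I would attempt this by presenting $K$ so that $F$ is built from a pair of dual systems of bands whose cores generate the expected free group in the complement, and for each of the eight knots verify unknottedness directly from a movie presentation (for instance, by checking that $\Sigma_K$ admits a Morse function realising the unknotted model of a genus-$2g_4(K)$ surface in $S^4$).

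The main obstacle is step three. The lower bound $g_{ds}(K) \geq 2g_4(K)$ is cheap once the first half of the theorem is in hand, but producing \emph{unknotted} doubled surfaces is delicate: a generic ribbon surface $F$ will only double to a ribbon-but-knotted $\Sigma_K$, and distinguishing which $F$ admit complementary handlebodies typically requires exploiting special symmetries of the knot (e.g.\ strong negative amphichirality or palindromic band presentations). I expect that for the listed eight knots this can be arranged by inspection of their band presentations, and that the verification of unknottedness reduces in each case to a finite group-theoretic or Kirby-calculus check.
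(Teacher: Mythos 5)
There is a genuine gap in your first step. You reduce \( g_{hr}(K) \le 2g_4(K) \) to producing, for every prime \( K \) with at most \(12\) crossings, a \emph{ribbon} surface of genus exactly \( g_4(K) \) --- that is, to proving \( g_r(K) = g_4(K) \) for all these knots. This is a higher-genus analogue of the slice-ribbon conjecture, and it cannot be obtained by ``reinterpreting'' the slice surfaces recorded in KnotInfo: there is no known procedure for converting a minimal-genus slice surface into a ribbon surface of the same genus (``performing compressions in \( B^4 \) in a ribbon-controlled way'' is precisely the hard open problem, not a proof technique). The paper sidesteps this entirely. For slice knots it uses the verification of the slice-ribbon conjecture through \(12\) crossings together with \Cref{Prop:rihr}; for most non-slice knots it uses the other half of \Cref{Prop:bounds}, namely \( g_{hr}(K) \le g_{ds}(K) \), combined with Karageorghis--Swenton's computation \( g_{ds}(K) = 2g_4(K) \); and for the \(58\) remaining knots it exhibits an explicit two-band cobordism to a ribbon knot and applies \Cref{Cor:tubes1}(iii), giving \( g_{hr}(K) \le 2g_r(J) + 2 = 2 = 2g_4(K) \). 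Note also that the paper deliberately leaves open whether \( g_{hr} \) can ever be odd (\Cref{Q:srg}); your claimed general bound \( g_{hr}(K) \le 2g_4(K) \) would settle that question and is strictly stronger than what the theorem asserts.

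Your treatment of the eight exceptional knots is also problematic. Checking that \( \pi_1(S^4 \smallsetminus \Sigma_K) \) is free is a necessary condition for unknottedness but is not known to be sufficient in the smooth category, so the proposed verification would not establish \( g_{ds}(K) \le 2g_4(K) \) even if it could be carried out; and a case-by-case Kirby-calculus certification of unknottedness for a doubled genus-\(2\) surface is a substantial project in its own right. The paper's route is much softer: each of the eight knots is obtained from a \emph{doubly slice} knot \( J \) by attaching two bands, and since attaching \(1\)-handles to a trivial surface-knot yields a trivial surface-knot (\Cref{Thm:tubes1}(iii)), \Cref{Cor:tubes1}(i) gives \( g_{ds}(K) \le g_{ds}(J) + 2 = 2 \), with the matching lower bound coming from \Cref{Prop:bounds} exactly as you say.
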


We also calculate the half ribbon genus of \(2156\) \(13\)-crossing knots, and in \(247\) such cases determine the double slice genus.

Orson and Powell constructed a knot \( K_{M,N} \) with \( 2g_{4} ( K_{M,N} ) = M \) and \( g_{ds} ( K_{M,N} ) = N \), for all integers \( 0 \leq M \leq N \) with \(M\) even \cite{Orson2021}. In \Cref{Sec:surfaces} we observe that \( g_{hr} ( K_{M,N} ) = M \), so that the half ribbon and double slice genera differ by an arbitrarily large amount. The analogous question regarding the slice and half ribbon genera is open. Answering it in full generality is at least as hard as resolving the slice-ribbon conjecture (see \Cref{Q:gaps}).

Suppose that \( K \) is a cross-section of a knotted surface \( S \). The calculations of \Cref{Thm:comp} rely on realising band attachments to \( K \) as \(3\)-dimensional \(1\)-handle attachments to \( S \) (see \Cref{Thm:tubes1}). This allows us to prove that if \( J \) is obtained from \( K \) via a sequence of \( \ell \) band attachments then
\begin{equation}\tag{Cor.~\ref{Cor:tubes1}}
	\begin{aligned}
	g_{ds} ( J ) - \ell \leq ~&g_{ds} ( K ) \leq g_{ds} ( J ) + \ell \\
	&g_{hr} ( K ) \leq 2g_{r} ( J ) + \ell
	\end{aligned}
\end{equation}
where \( g_{r} ( J ) \) denotes the ribbon genus. We also prove a version of this result for general ribbon cobordisms, \Cref{Thm:tubes2}, from which results of McDonald \cite[Theorems 3.1, 3.2]{McDonald2019} can be recovered.

We consider a finer notion of complexity for half ribbon knots. Ribbon \(2\)-knots are precisely those obtainable from a disjoint union of trivial \(2\)-spheres by attaching \(1\)-handles. The minimum number of \(1\)-handles required to form a \(2\)-knot \(S\) in this way is the \emph{fusion number}, denoted \( f ( S ) \). We introduce the \emph{half fusion number}, \( f_h ( K ) \), of a half ribbon knot \( K \): it is the minimum \( f ( S ) \) such that \( K \) is a cross-section of \(S\).

As only the trivial \(2\)-knot has fusion number zero it follows that \( f_h (K) = 0 \) if and only if \( g_{ds} ( K ) = 0 \). The Levine-Tristram signatures bound the double slice genus from below \cite{Orson2021}, and we prove that this also holds for the half fusion number:
\begin{equation}
\tag{\Cref{Thm:sigbound}}
	f_{h} ( K ) \geq \underset{{\omega \in S^1 \smallsetminus \lbrace 1\rbrace}}{\text{max}}  \left\lbrace |\sigma_{\omega} (K)| \right\rbrace.
\end{equation}

Together with the observation (made in \Cref{Sec:fusionnumbers}) that \( f ( K ) - g_{ds} ( K ) \) can be arbitrarily large, this poses the question: what is the precise relationship between the double slice genus and half fusion number?

The \emph{fusion number} of ribbon knot \( K \), \( f ( K ) \), is the minimum number of bands in a ribbon disc for \( K \). As described in \Cref{Sec:fusionnumbers} the fusion number is bounded below by the half fusion number. We prove that these quantities can differ by an arbitrarily large amount: for all integers \( 0 \leq M < N \) there exists a ribbon knot \( K \) with
\begin{equation}
\tag{Prop.~\ref{Prop:fgaps}}
f_h ( K ) = M ~\text{and}~ f ( K ) \geq N.
\end{equation}

For such a knot \( K \) arbitrarily more bands are required in a ribbon disc than \(1\)-handles are required to form a ribbon \(2\)-knot (of which \(K\) appears as a cross-section). In other words, the structure of the set of ribbon discs for \(K\) is in some sense distinct to that of such ribbon \(2\)-knots.

\subsubsection*{Conventions} All manifolds and embeddings are smooth and orientable. Knots are labelled as per KnotInfo \cite{knotinfo}.

\section{Dividing ribbon knotted surfaces}\label{Sec:defs}
We recall necessary background in \Cref{Sec:background} before tackling our main objects of study in \Cref{Sec:spheres,Sec:surfaces,Sec:fusionnumbers}.

\subsection{Background}\label{Sec:background}
A \emph{\(1\)-link} is a link in \( S^3 \), and \(1\)-link of one component is a \emph{\(1\)-knot}. A \emph{\(2\)-knot} is an embedding \( S^2 \hookrightarrow S^4 \), and a \emph{surface-knot} is an embedding \( F_i \hookrightarrow S^4 \) for \( F_i \) a closed orientable surface. A surface-knot is \emph{trivial} if it bounds an embedded handlebody in \( S^4 \). All of the above objects are considered up to ambient isotopy.

Henceforth we denote by \( S^3_0 \) an equator in \( S^4 \), and by \( B^4_+ \), \( B^4_- \) the associated hemispheres (so that \( S^4 = B^4_+ \cup_{S^3_0} B^4_- \)).

\begin{definition}\label{Def:divides}
	Let \( K \) be a \(1\)-knot and \( S \) a surface-knot. We say that \emph{\(K\) divides \(S\)} if there exists an equator \( S^3_0 \) such that \( S^3_0 \cap S = K \). We also refer to \( K \) as a \emph{cross-section} of \( S \).
\end{definition}

Every \(1\)-knot divides a surface-knot, and this relationship has been of great interest to low-dimensional topologists for almost a century. Questions on this relationship are broadly of two kinds. First, given a fixed \(1\)-knot how complex are the surface-knots that it divides? Obversely, for surface-knots of a given complexity what are the \(1\)-knots that divide them? The focus of this paper is a question of the second type: studying the \(1\)-knots that divide \emph{ribbon} surface-knots.

\begin{definition}[Ribbon surface-knot]\label{Def:ribbonsknot}
	We say that a surface-knot \( S \) is \emph{ribbon} if it bounds a properly embedded handlebody in \( B^5 \) on which the radial height function restricts to a Morse function without critical points of index \( 2 \).
\end{definition}

Before presenting formal definitions of our main objects of interest we fix some further terminology.

\begin{definition}[Slice, ribbon surface]\label{Def:sliceribbonsurface}
	A \emph{slice surface} for a \(1\)-link \( K \) is a compact orientable surface \( F \) properly embedded in \( B^4 \) such that \( \partial F = K \). A \emph{ribbon surface} is a slice surface on which the radial height function restricts to a Morse function without critical points of index \(2\).
\end{definition}

This Morse theoretic definition of a ribbon surface is equivalent to the definition via surfaces immersed in \( S^3 \) with ribbon singularities (see, for example, \cite[Lemma 11.9]{Kamada2002}). Note that slice surface and surface-knot are distinct concepts, likewise ribbon surface and ribbon surface-knot. 

Given a slice surface \( F \) for a \(1\)-knot \( K \) we may form a surface-knot divided by \( K \) as follows. Regard \( K \) as lying in an equator \( S^3_0 \) and \( F \) in \( B^4_+ \). Denote by \( \overline{F} \) the surface obtained by reflecting \( F \) through \( S^3_0 \). The surface-knot \( F \cup_K \overline{F} \) is known as the \emph{double of \( F \)}, and \( K \) divides it by construction.

An embedded \(2\)-sphere in \( S^4 \) is \emph{standard} if it is in Morse position with exactly two critical points (of index \(0\) and \(2\) necessarily). An isotopy representative of a surface-knot is a \emph{sphere-tube presentation} if it is obtained by attaching \(3\)-dimensional \(1\)-handles to a disjoint union of standard \(2\)-spheres. A surface-knot is ribbon if and only if it possesses a sphere-tube presentation \cite[Section 5.6]{Kamada2017}.

We frequently make use of the fact that the double of a ribbon surface for \( K \) is a ribbon surface-knot divided by \( K \). This is described in \cite[Figure 2]{McDonald2019} and the related discussion (for full details see, for example, \cite[Section 5]{Kamada2017}). We suffice ourselves by observing, as in \Cref{Fig:dbst}, that a ribbon surface is made up of discs and bands; upon doubling discs become trivial \(2\)-spheres and bands become \(1\)-handles in a sphere-tube presentation of the double.

\begin{figure}
	\includegraphics[scale=0.75]{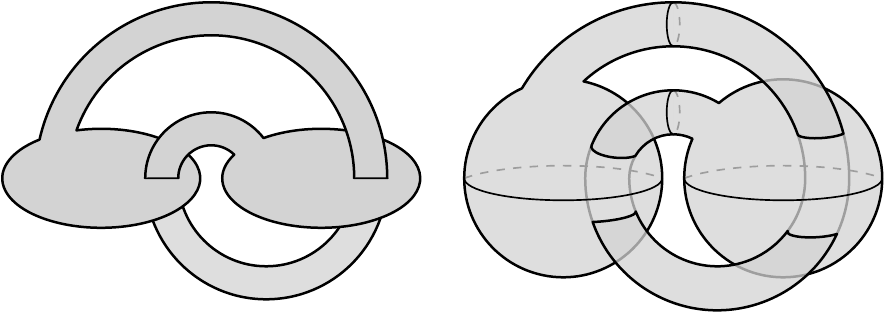}
	\caption{On the left: a ribbon surface \(F\) formed of discs and bands. On the right: the induced sphere-tube presentation of the double of \(F\).}
	\label{Fig:dbst}
\end{figure}

\subsection{Dividing spheres}\label{Sec:spheres}
We say that a \( 1 \)-knot \( K \) is \emph{slice} if it divides a \(2\)-knot \cite{Artin1925,Fox57}, and that \( K \) is \emph{doubly slice} if it divides the trivial \(2\)-knot \cite{Sumners1971,Fox62}.

Using \Cref{Def:ribbonsknot} we introduce a notion that lies between slice and doubly slice.

\begin{definition}[Half ribbon]\label{Def:halfribbon}
	We say that a \(1\)-knot \( K \) is \emph{half ribbon} if it divides a ribbon \(2\)-knot.
\end{definition}

Notice that as the trivial \(2\)-knot is ribbon it follows that a doubly slice \(1\)-knot is half ribbon (the converse fails, for example, on the \(1\)-knot \( 6_1 \)). Half ribbon knots are of course slice, but the status of the converse is an open question (as outlined in \Cref{Q:2sr}).

Recall that a \(1\)-knot \( K \) is \emph{ribbon} if it bounds a ribbon surface of genus \( 0 \); such a surface is known as a \emph{ribbon disc} for \( K \). 

\begin{proposition}\label{Prop:rihr}
	Ribbon \(1\)-knots are half ribbon.
\end{proposition}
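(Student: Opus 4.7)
The plan is to apply the doubling construction described in \Cref{Sec:background} to a ribbon disc and verify that the resulting \(2\)-knot is ribbon. Concretely, suppose that \( K \) is a ribbon \(1\)-knot and fix a ribbon disc \( D \subset B^4 \) for \( K \). Position \( K \) in the equator \( S^3_0 \) and \( D \) in \( B^4_+ \), and form the double \( S = D \cup_K \overline{D} \subset S^4 \). Since \( D \) has genus zero, \( S \) is a topological \(2\)-sphere; and since \( \partial D = K \subset S^3_0 \), by construction \( S^3_0 \cap S = K \), so that \( K \) divides \( S \) in the sense of \Cref{Def:divides}.

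It remains to show that \( S \) is a ribbon \(2\)-knot. This is exactly the statement that doubling a ribbon surface produces a ribbon surface-knot, which is recorded in the last paragraph of \Cref{Sec:background} (and detailed in \cite[Section 5]{Kamada2017} and \cite[Figure 2]{McDonald2019}). The idea is that a ribbon surface \( D \) decomposes as a disjoint union of flat discs together with bands attached along their boundary; upon doubling, each such disc becomes a standard (unknotted) \(2\)-sphere in \( S^4 \), and each band becomes a \(3\)-dimensional \(1\)-handle joining two of these spheres. The resulting decomposition of \( S \) is precisely a sphere-tube presentation, and a surface-knot possessing such a presentation is ribbon.

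Since \( S \) admits a sphere-tube presentation it is a ribbon \(2\)-knot, divided by \( K \); hence \( K \) is half ribbon by \Cref{Def:halfribbon}. The one point that requires care is checking that the doubling procedure really does send the Morse data of the ribbon disc (critical points of index \(0\) and \(1\) only, in the form of discs and bands) to the sphere-tube data of \( S \); this is not a serious obstacle since it is a standard consequence of radial symmetry of the construction across \( S^3_0 \), but it is the only step that is not purely formal. The references cited above handle this in full detail, so the proof amounts to appealing to them after performing the doubling.
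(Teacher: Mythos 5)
Your proof is correct and follows exactly the route the paper takes: double the ribbon disc across the equator and invoke the fact (recorded at the end of \Cref{Sec:background}) that the double of a ribbon surface is a ribbon surface-knot divided by \( K \). The paper's own proof is just a two-sentence appeal to this same fact, so your additional detail about sphere-tube presentations is simply an expansion of the same argument.
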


\begin{proof}
	Suppose that \( D \) is a ribbon disc for a \(1\)-knot \( K \). The double of \( D \) is a ribbon \(2\)-knot that \( K \) divides by construction.
\end{proof}

Thus doubly slice implies slice but the converse is false, and ribbon implies half ribbon but the converse may be false. In other words, ribbon is to the property of dividing a ribbon \(2\)-knot as doubly slice is to slice, whence the name half ribbon.

\Cref{Prop:rihr} shows that the slice-ribbon conjecture splits into the following questions.

\begin{question}\label{Q:2sr} Let \( K \) be a \(1\)-knot.
	\begin{enumerate}[(i)]
		\item If \( K \) divides a \(2\)-knot must it divide a ribbon \(2\)-knot?
		\item If \( K \) divides a ribbon \( 2 \)-knot must it possess a ribbon disc?
	\end{enumerate}
	A negative answer to (i) or (ii) would yield a counterexample to the slice-ribbon conjecture.
\end{question}

\subsection{Dividing surfaces}\label{Sec:surfaces}
The \emph{slice genus} of a \(1\)-knot \( K \), \( g_4 ( K ) \), is the minimum genus of a slice surface for \( K \). The \emph{ribbon genus} of \( K \), \( g_r ( K ) \), is the minimum genus of a ribbon surface for \( K \).

Notice that taking the double of a slice surface for \( K \) yields a surface-knot divided by \( K \). Generically this surface-knot will be nontrivial. Restricting to trivial surface-knots (not necessarily doubles of slice surfaces for \(1\)-knots) yields the \emph{double slice genus} of \( K \), \( g_{ds} ( K ) \), the minimum genus of a trivial surface-knot divided by \( K \) \cite{Livingston2015}.

Just as in \Cref{Sec:spheres} we can use \Cref{Def:ribbonsknot} to define a quantity intermediate to the slice and double slice genera.

\begin{definition}[Half ribbon genus]\label{Def:hrg}
	Let \( K \) be a \(1\)-knot. The \emph{half ribbon genus of \(K\)}, \( g_{hr} ( K ) \), is the minimum genus of a ribbon surface-knot divided by \( K \). 
\end{definition}

Of course, \( K \) is of half ribbon genus zero if and only if it is half ribbon. The half ribbon genus is finite for all \(1\)-knots as it is bounded above by the double slice genus and twice the ribbon genus.

\begin{proposition}\label{Prop:bounds}
	Let \( K \) be a \(1\)-knot, then
	\begin{equation*}
	2 g_4 ( K ) \leq g_{hr} ( K ) \leq 2g_{r} ( K ) \leq g_{ds} ( K ).
	\end{equation*}
\end{proposition}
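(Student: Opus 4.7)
The proof combines a cutting-by-equator argument (for the leftmost and rightmost inequalities) with a doubling argument (for the middle one).

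For the cutting step, given any surface-knot $S$ divided by $K$ set $F_\pm = S \cap B^4_\pm$. Each is a compact orientable surface with $\partial F_\pm = K$; since $S$ and $K$ are both connected, neither half may contain a closed component (such a component would be a component of $S$, contradicting $K \subset S$), so both halves are connected. A standard Euler characteristic computation, using $\chi(K) = 0$, yields
\[
g(S) = g(F_+) + g(F_-).
\]
Specialising $S$ to a minimum-genus ribbon surface-knot divided by $K$, each $F_\pm$ is a slice surface for $K$ and hence has genus at least $g_4(K)$, giving $g_{hr}(K) \geq 2g_4(K)$. Specialising $S$ instead to a minimum-genus trivial surface-knot $F$ divided by $K$, at least one half, say $F_+$, satisfies $g(F_+) \leq g_{ds}(K)/2$; to conclude the rightmost inequality it suffices to argue that $F_+$ may be taken to be a ribbon surface for $K$.

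For the middle inequality $g_{hr}(K) \leq 2g_r(K)$, let $R$ be a minimum-genus ribbon surface for $K$. As noted in the background, doubling $R$ across $S^3_0$ promotes its disc-and-band structure to a sphere-and-tube presentation, so $R \cup_K \overline{R}$ is a ribbon surface-knot divided by $K$; the same Euler characteristic identity shows its genus is $2 g_r(K)$.

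The main technical step is therefore the ribbon-ness of the smaller half of a trivial surface-knot divided by $K$. For this one exploits the fact that a trivial surface-knot $F$ bounds an embedded handlebody in $S^4$ which, after an ambient isotopy fixing $K$ setwise, can be positioned so that the radial height function on $B^4_+$ restricts to $F_+$ without index-$2$ critical points; this is precisely the condition that $F_+$ be a ribbon surface. The delicate part of this argument is ensuring the positioning can be arranged without disturbing the cross-section $K$, using the standard form of a trivial surface-knot as a symmetrically arranged sphere-tube presentation across $S^3_0$.
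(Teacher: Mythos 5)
Your handling of the first two inequalities is correct. The cutting argument for \( 2g_4(K) \leq g_{hr}(K) \) --- the two halves of a surface-knot divided by \( K \) are slice surfaces whose genera sum to that of the whole --- is the standard one, which the paper leaves implicit, and your proof of \( g_{hr}(K) \leq 2g_r(K) \) by doubling a minimal-genus ribbon surface is exactly the paper's argument.

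The rightmost inequality is where there is a genuine gap. You reduce \( 2g_r(K) \leq g_{ds}(K) \) to the claim that a cross-sectional half \( F_+ \) of a minimal-genus trivial surface-knot divided by \( K \) ``may be taken to be'' a ribbon surface, and you defer this to an ambient isotopy ``fixing \( K \) setwise'' that eliminates the index-\(2\) critical points of \( F_+ \). This is not a positioning issue that general position or a symmetrised sphere-tube presentation will resolve: an isotopy that fixes \( K \) but not \( S^3_0 \) changes the two halves (and can redistribute genus between them, spoiling the pigeonhole bound \( g(F_+) \leq g_{ds}(K)/2 \)), while an isotopy preserving \( S^3_0 \) setwise is an isotopy of the pair and gives you no new freedom. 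In the genus-zero case your claim asserts precisely that every doubly slice knot bounds a ribbon disc, which is not a known theorem; so the step you flag as ``delicate'' is in fact the entire content of the inequality and cannot be waved through. Be aware that the paper does not take your route: it never cuts the trivial surface-knot, and its own justification of \( 2g_r(K) \leq g_{ds}(K) \) is a one-sentence remark appended to the doubling construction (that the double of the minimal ribbon surface need not be trivial), so you will not find there the missing lemma your argument needs. Any complete writeup has to isolate this inequality and either prove the ribbon-ness of the half or derive the bound by some other means.
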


\begin{proof}
	Let \( F \) be a ribbon surface for \( K \) with \( g( F) = g_{r} ( K ) \). The double of \( F \) is a ribbon surface-knot of genus \( 2 g_r  ( K ) \) and is divided by \( K \), so that \( g_{hr} ( K ) \leq 2 g_r  ( K )  \). As the double of \( F \) is not necessarily trivial we have \( 2g_{r} ( K ) \leq g_{ds}  ( K ) \).
\end{proof}

\begin{proposition}\label{Prop:subadd}
	The half ribbon genus is subadditive with respect to the connected sum of \( 1 \)-knots.
\end{proposition}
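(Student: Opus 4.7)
The plan is to construct, from minimising ribbon surface-knots for $K_1$ and $K_2$, a ribbon surface-knot divided by $K_1 \# K_2$ that realises the subadditivity bound. For $i = 1, 2$, let $S_i$ be a ribbon surface-knot with $g(S_i) = g_{hr}(K_i)$, divided by $K_i$ via an equator $S^3_{0,i} \subset S^4$, so that $S^3_{0,i} \cap S_i = K_i$.

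I would next choose a point $p_i \in K_i$ and a small 4-ball $B_i \subset S^4$ centred at $p_i$ for which the quadruple $(B_i, B_i \cap S^3_{0,i}, B_i \cap S_i, B_i \cap K_i)$ is standard: $B_i \cap S^3_{0,i}$ is a 3-ball, and $B_i \cap S_i$ is an unknotted disk meeting $S^3_{0,i}$ transversely in the unknotted arc $B_i \cap K_i$. Removing the interiors of the $B_i$ and gluing the boundary 3-spheres by an orientation-reversing diffeomorphism matching all of these standard pieces produces a surface-knot $S \subset S^4$, of genus $g(S_1) + g(S_2) = g_{hr}(K_1) + g_{hr}(K_2)$, divided by the equator $(S^3_{0,1} \setminus \mathring{B_1}) \cup (S^3_{0,2} \setminus \mathring{B_2})$ along $K_1 \# K_2$.

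It remains to verify that $S = S_1 \# S_2$ is ribbon, which I would deduce from the sphere-tube characterisation recalled in \Cref{Sec:background}. Each $S_i$ is isotopic to a surface obtained by attaching 1-handles to a disjoint union of standard 2-spheres, so the disjoint union $S_1 \sqcup S_2$ inherits such a presentation; the surface-knot connected sum can then be realised by attaching one further 1-handle joining a sphere of one presentation to a sphere of the other, which is again a sphere-tube presentation for $S$. Hence $S$ is ribbon, and $g_{hr}(K_1 \# K_2) \leq g(S) = g_{hr}(K_1) + g_{hr}(K_2)$. The main subtlety, easily handled, is the compatibility of the abstract surface-knot connected sum with the explicit equatorial construction; this is immediate since the surface-knot connected sum is well-defined up to isotopy independently of the chosen ball positions.
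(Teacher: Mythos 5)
Your proof is correct and follows essentially the same route as the paper: both realise \( K_1 \# K_2 \) as the cross-section of a connected sum of the two minimising ribbon surface-knots, and both deduce ribbonness by transferring sphere-tube presentations across a single joining \(1\)-handle, using the uniqueness of that handle attachment (the paper cites \cite[Proposition 1.2.11]{Kamada2017} where you appeal to well-definedness of the surface-knot connected sum). The only cosmetic difference is that you perform the connected sum by excising standard balls and gluing, whereas the paper attaches a \(1\)-handle to a split union.
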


\begin{proof}	
	Let \( K_1 \), \( K_2 \) be \( 1 \)-knots and \( S_1 \), \( S_2 \) ribbon surface-knots such that \( K_i \) divides \( S_i \). Denote by \( S \) the split union of \( S_1 \) and \( S_2 \). That is, \( S \) is a disjoint union of \( S_1 \) and \( S_2 \), and there exists a \( 4 \)-ball \( B \) such that \( S_1 \cap B = S_1 \), \( S_2 \cap B = \varnothing \).
	
	Let \( S' \) be the result of adding a \(1\)-handle between the components of \( S \), chosen so that its core intersects \( \partial B \) in exactly one point and the connected sum of \( K_1 \) and \( K_2 \) appears as the equatorial cross-section.
	
	As \( S_1 \) and \( S_2 \) are ribbon they possess sphere-tube presentations. Let \( S^{\prime\prime} \) be the surface-knot obtained by first isotoping \( S_1 \) and \( S_2 \) into such presentations and then adding a \(1\)-handle between them, the core of which intersects \( \partial B \) in exactly one point. (Notice that the isotopy taking \( S_1 \) and \(S_2\) to their sphere-tube presentations may be chosen to fix \( \partial B \) pointwise.) A schematic for the split union of \( S_1 \) and \(S_2\), together with \( S' \) and \( S^{\prime\prime} \) is given in \Cref{Fig:connectedsums}.
	
	It follows that \( S^{\prime\prime} \) is a ribbon surface-knot as it possesses a sphere-tube presentation. By \cite[Proposition 1.2.11]{Kamada2017} the surface-knots \( S' \) and \( S^{\prime\prime} \) are identical. Thus \( S' \) is a ribbon surface-knot divided by \( K_1 \# K_2 \) of genus \( g(S_1) + g(S_2) \), so that \( g_{hr} ( K_1 \# K_2 ) \leq g_{hr} ( K_1 ) + g_{hr} ( K_2 ) \).
\end{proof}

\begin{figure}
	\includegraphics[scale=0.75]{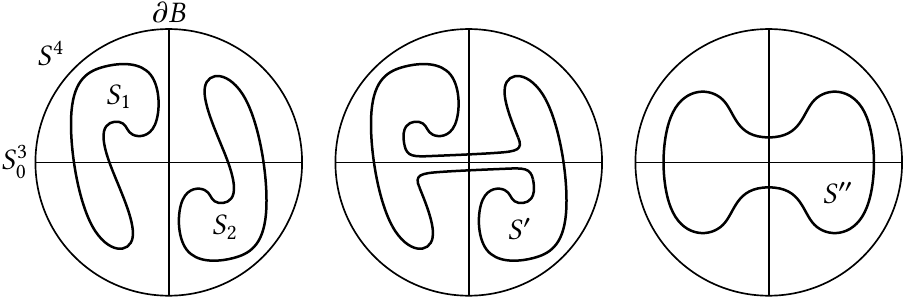}
	\caption{Schematic diagrams of the split union of \( S_1 \) and \(S_2\), and the surface-knots \( S' \) and \( S^{\prime\prime} \).}
	\label{Fig:connectedsums}
\end{figure}

W.\ Chen constructed the first examples of \(1\)-knots of arbitrarily large double slice genus \cite{Chen2021}. These \(1\)-knots are ribbon and are therefore of half ribbon genus zero by \Cref{Prop:rihr}.

Let \( \overline{K} \) denote the mirror image of a \(1 \)-knot \(K\). Orson and Powell showed that the knot \( J = \left( \#^{\frac{M}{2}} \overline{5_2} \right) \# \left( \#^{N-M} 8_{20} \right) \) satisfies \( 2 g_4 ( J ) = M \) and \( g_{ds} ( J ) = N \), for all integers \( 0 \leq M \leq N \) with \( M \) even \cite{Orson2021}. As \( 2 g_4  \left(\overline{5_2}\right)  = g_{ds} \left(\overline{5_2}\right) = 2 \) and \( 8_{20} \) is ribbon we have that \( g_{hr} \left(\overline{5_2}\right) = 2 \) and \( g_{hr} ( 8_{20} ) = 0 \) by \Cref{Prop:bounds}, and \( g_{hr} ( J ) = M \) by \Cref{Prop:subadd}. It follows that the half ribbon and doubly slice genera differ by an arbitrarily large amount. The analogous question regarding the slice genus remains open.

\begin{question}\label{Q:gaps}
	Given integers \( 0 \leq M \leq N \leq P \), does there exist a \(1\)-knot \( K \) such that
	\begin{equation*}
	2 g_{4} ( K ) = M, ~ g_{hr} ( K ) = N, ~ g_{ds} ( K ) = P\text{?}
	\end{equation*}
	Does there exist a prime \(1\)-knot with this property?
\end{question}

Answering \Cref{Q:gaps} for all \( M \), \( N \), \( P \) is at least as hard as finding a \(1\)-knot with distinct slice and ribbon genera.

\begin{question}\label{Q:srg}
	\emph{Does there exist a \(1\)-knot \( J \) of odd half ribbon genus?}
	
	Such a \( J \) must have distinct slice and ribbon genera by \Cref{Prop:bounds}.
\end{question}

In \Cref{Sec:computations} we determine the half ribbon genus of all \(1\)-knots up to \(12\) crossings to be even.

Specialising further, establishing the existence of \(1\)-knots of half ribbon genus one would resolve the slice-ribbon conjecture in the negative.

\begin{question}\label{Q:sr}
	\emph{Does there exist a \(1\)-knot \( K \) of half ribbon genus one?}
	
	Such a \( K \) would be a counterexample to the slice-ribbon conjecture: \( g_4 ( K ) = 0 \) by \Cref{Prop:bounds}, but \( K \) is not ribbon as \( g_{hr} ( K ) \neq 0 \). 
\end{question}

Satoh defined a surjective map from the category of welded knots to that of ribbon tori \cite{Satoh2000}. Can this map be used to address \Cref{Q:sr}?

\subsection{Fusion numbers}\label{Sec:fusionnumbers}
In \Cref{Sec:spheres,Sec:surfaces} we consider the problem of minimising the genus of a surface-knot divided by a given \(1\)-knot. In this section we consider minimising an alternative measure of complexity.

Let \( K \) be a ribbon \( 1 \)-knot. The \emph{fusion number of \( K \)}, \( f ( K ) \), is the minimum number of bands in a ribbon disc for \( K \). Let \( S \) be a ribbon \(2\)-knot. The \emph{fusion number of \( S \)}, \( f ( S ) \), is the minimum number of \(1\)-handles in a sphere-tube presentation for \( S \).

For half ribbon \(1\)-knots we define a new quantity in terms of the fusion number of the ribbon \(2\)-knots they divide.

\begin{definition}[Half fusion number]\label{Def:hf}
	Let \( K \) be a half ribbon \( 1 \)-knot. The \emph{half fusion number of \(K\)}, \( f_{h} ( K ) \), is the minimum fusion number of a ribbon \(2\)-knot divided by \( K \).
\end{definition}

Note that \( f ( S )  = 0 \) if and only if \( S \) is a trivial \(2\)-knot, so that \( f_h ( K ) = 0 \) if and only if \( g_{ds} ( K ) = 0 \). The proof of \Cref{Prop:subadd} also establishes the subadditivity of the half fusion number with respect to the connected sum of \(1\)-knots.

Orson and Powell showed that the Levine-Tristram signatures bound the double slice genus from below \cite{Orson2021}. The same is true of the half fusion number.

\begin{theorem}\label{Thm:sigbound}
	Let \( K \) be a half ribbon \(1\)-knot. Then
	\begin{equation*}
	f_{h} ( K ) \geq \underset{{\omega \in S^1 \smallsetminus \lbrace 1 \rbrace}}{\text{max}}  \left\lbrace |\sigma_{\omega} (K)| \right\rbrace.
	\end{equation*}
\end{theorem}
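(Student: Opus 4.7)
\emph{Proof plan.} The plan is to argue via a Mayer--Vietoris calculation on infinite cyclic covers coming from the dividing, combined with a Hermitian form bound for the Blanchfield pairing of \(K\). Fix a ribbon \(2\)-knot \(S\) divided by \(K\) with \(f(S) = n = f_{h}(K)\), and take a sphere-tube presentation of \(S\) with \(n+1\) trivial \(2\)-spheres and \(n\) three-dimensional \(1\)-handles. The associated Wirtinger-style presentation of \(\pi_{1}(S^{4} \setminus \nu S)\) has \(n+1\) meridional generators and \(n\) relators, so a Fox calculus argument shows that the Alexander module \(A_{S} := H_{1}(\widetilde{S^{4} \setminus \nu S}; \mathbb{Z})\) is generated over \(\mathbb{Z}[t^{\pm 1}]\) by at most \(n\) elements.

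Next I would write \(F_{\pm} := S \cap B^{4}_{\pm}\) for the slice discs supplied by the dividing, and set \(Y_{\pm} := B^{4}_{\pm} \setminus \nu(F_{\pm})\) and \(L_{\pm} := \ker\bigl(A_{K} \to H_{1}(\widetilde{Y}_{\pm})\bigr)\). Each \(L_{\pm}\) is a metaboliser for the Blanchfield form on \(A_{K}\), a classical fact for slice disc exteriors. Applying Mayer--Vietoris on the infinite cyclic covers of
\begin{equation*}
S^{4} \setminus \nu S = Y_{+} \cup_{S^{3}_{0} \setminus \nu K} Y_{-},
\end{equation*}
together with the surjectivity of \(A_{K} \to H_{1}(\widetilde{Y}_{\pm})\), identifies \(A_{S} \cong A_{K}/(L_{+}+L_{-})\), so this quotient is also generated by at most \(n\) elements. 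For each \(\omega \in S^{1} \setminus \{1\}\), tensoring with the \(\mathbb{Z}[t^{\pm 1}]\)-module \(\mathbb{C}_{\omega}\) (in which \(t\) acts by \(\omega\)) gives a finite-dimensional Hermitian space \(A_{K} \otimes_{\mathbb{Z}[t^{\pm 1}]} \mathbb{C}_{\omega}\) whose signature is \(\sigma_{\omega}(K)\). The subspaces \(L_{\pm} \otimes \mathbb{C}_{\omega}\) are isotropic, and the standard Hermitian form bound for two isotropic subspaces yields
\begin{equation*}
|\sigma_{\omega}(K)| \leq \dim_{\mathbb{C}}\bigl( (A_{K}/(L_{+}+L_{-})) \otimes_{\mathbb{Z}[t^{\pm 1}]} \mathbb{C}_{\omega} \bigr) \leq n.
\end{equation*}
Taking the maximum over \(\omega\) gives the result.

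The hardest step will be justifying the final Hermitian inequality precisely at those \(\omega\) which are roots of the Alexander polynomial of \(K\), where the Blanchfield pairing can degenerate. There one must reconcile \(\sigma_{\omega}(K)\) (defined via a Seifert matrix) with the signature of the possibly degenerate residue form on \(A_{K} \otimes_{\mathbb{Z}[t^{\pm 1}]} \mathbb{C}_{\omega}\), either by a direct residue calculation or by a continuity / step-function argument on \(\omega \mapsto \sigma_{\omega}(K)\) that bypasses the singular values.
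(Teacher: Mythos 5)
Your route is genuinely different from the paper's, and its skeleton (the Fox calculus count of generators of \(A_S\), the surjectivity of \(A_K \to H_1(\widetilde{Y}_\pm)\), and the Mayer--Vietoris identification \(A_S \cong A_K/(L_+ + L_-)\)) is sound. The problem is that the step you flag as ``hardest'' is not an edge case to be cleaned up afterwards: it is the entire content of the theorem. A half ribbon knot is slice, and for a slice knot \(\sigma_\omega(K) = 0\) at every \(\omega \in S^1 \smallsetminus \{1\}\) with \(\Delta_K(\omega) \neq 0\); moreover \(A_K \otimes_{\mathbb{Z}[t^{\pm 1}]} \mathbb{C}_\omega = 0\) at such \(\omega\) since \(A_K\) is \(\Delta_K\)-torsion. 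So away from the roots of \(\Delta_K\) your inequality reads \(0 \leq 0\), and every \(\omega\) at which the theorem says something nontrivial is a root of \(\Delta_K\). Consequently the suggested fallback of a ``continuity / step-function argument that bypasses the singular values'' cannot close the gap: there is nothing left to prove at the nonsingular values, and the naive limiting argument at a root \(\omega_0\) only yields \(|\sigma_{\omega_0}(K)| \leq \eta_{\omega_0}(K) = \dim_{\mathbb{C}}\bigl(A_K \otimes \mathbb{C}_{\omega_0}\bigr)\), which ignores \(L_\pm\) entirely and is in general strictly weaker than the bound by \(\dim_{\mathbb{C}}\bigl((A_K/(L_+ + L_-)) \otimes \mathbb{C}_{\omega_0}\bigr)\) that you need. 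What is actually required is a precise comparison, at roots of \(\Delta_K\), between the Seifert-matrix signature \(\sigma_{\omega}(K)\) and a Hermitian residue form attached to the Blanchfield pairing for which the images of \(L_\pm\) are isotropic --- a genuinely delicate piece of work in the style of Borodzik--Friedl, not a routine verification.

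The paper avoids all of this by outsourcing the signature-theoretic content. It takes \(S\) with \(f(S) = f_h(K)\), invokes Miyazaki's theorem that attaching \(f(S)\) trivial \(1\)-handles to a ribbon \(2\)-knot unknots it, arranges those handles so that the new equatorial cross-section is the split union \(K \cup U\) of \(K\) with an unlink, and then applies the Conway--Orson lower bound for the weak doubly slice genus of links, giving \(|\sigma_\omega(K \cup U)| \leq g^1_{ds}(K \cup U) \leq g(S') = f_h(K)\), finishing with additivity of \(\sigma_\omega\) under split union. If you want to pursue your algebraic route, the honest course is to prove the residue-form statement directly (or locate it in the literature); as written, the proposal reduces the theorem to an unproved assertion exactly where the theorem has content.
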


\begin{proof}
	Let \( S \) be a ribbon \( 2 \)-knot divided by \( K \) such that \( f ( S ) = f_h ( K ) \). By adding \( f ( S ) \) \(1\)-handles to \( S \) it can be converted into a trivial surface-knot \( S' \) \cite{Miyazaki1986}. Moreover, as the result of adding a \(1\)-handle depends only on its core \cite[Proposition 5.1.6]{Kamada2017} these handles may be chosen so that \( K \cup U \) divides \( S' \), for \( K \cup U \) the split union of \( K \) and an unlink \( U \). Generically the handles may intersect the equatorial \( S^3 \) so that we cannot avoid the appearance of this unlink, as per the schematic given in \Cref{Fig:piercinghandles}.
	
	The genus of \( S' \) is equal to \( f ( S ) \) and bounds from above the weak double slice genus of \( K \cup U \), denoted \( g^1_{ds} ( K \cup U ) \) \cite[Equation 1]{Conway2022}. Conway and Orson showed that the Levine-Tristram signatures bound this quantity from below \cite[Corollary 1.3]{Conway2022}, so that
	\begin{equation*}
	\left| \sigma_{\omega} ( K \cup U ) \right| \leq g^1_{ds} ( K \cup U ) \leq g ( S' ) = f_h ( K ).
	\end{equation*}
	The proposition follows by the additivity of the signature under disjoint union.
\end{proof}

\begin{figure}
	\includegraphics[scale=0.75]{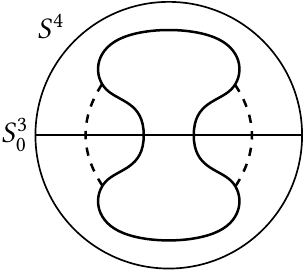}
	\caption{Attaching \(1\)-handles with cores given by the dashed arcs introduces an unlink to the equatorial cross-section.}
	\label{Fig:piercinghandles}
\end{figure}

In the proof above the surface-knot divided by \( K \) is trivialised by attaching \( f_h ( K ) \) \( 1 \)-handles. However, this does not allow us to conclude that \( g_{ds} ( K ) \) bounds \( f_h ( K ) \) from below, as we can guarantee only that \( K \cup U \) divides this trivial surface-knot.

Let \( D \) be a ribbon disc for a \( 1 \)-knot \( K \). The bands of \( D \) yield \(1\)-handles in the double of \( D \), that is a ribbon \(2\)-knot divided by \( K \). It follows that \( f_h ( K ) \leq f ( K ) \). There exist ribbon knots whose fusion and half fusion numbers are arbitrarily far apart.

\begin{proposition}\label{Prop:fgaps}
	For all integers \( 0 \leq M < N \) there exists a ribbon \(1\)-knot \( K \) such that \( f_h ( K ) = M \) and \( f ( K ) \geq N \).
\end{proposition}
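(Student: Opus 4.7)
The plan is to construct $K$ as a connected sum $K = K_M \# L_N$, where $K_M$ is a ribbon knot with $f_h(K_M) = M$ and $L_N$ is a doubly slice ribbon knot with $f(L_N) \geq N$. Since $L_N$ is doubly slice, $f_h(L_N) = 0$, so subadditivity of the half fusion number (established by the same argument as \Cref{Prop:subadd}) gives $f_h(K) \leq M$. The remaining work is to realize the matching lower bound $f_h(K) \geq M$ and the fusion-number lower bound $f(K) \geq N$.

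For $K_M$ I would take $\#^M J$, where $J$ is a ribbon knot with $f_h(J) = 1$. A natural candidate is $J = 6_1$: it is ribbon with $f(6_1) = 1$, so $f_h(6_1) \leq 1$, and it is not doubly slice, whence $f_h(6_1) \geq 1$ by the characterization $f_h = 0 \iff g_{ds} = 0$ observed after \Cref{Def:hf}. The matching lower bound $f_h(K_M) \geq M$ would then come from \Cref{Thm:sigbound}, applied at a Levine-Tristram parameter $\omega$ for which $|\sigma_\omega(J)|$ realizes the desired value, combined with the additivity of LT signatures under connected sum.

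For $L_N$ I would use a family of doubly slice ribbon knots with unbounded fusion number: for instance iterated connected sums of a ribbon knot with its reverse mirror. These are doubly slice by the symmetry of the summand, so $f_h = 0$ and $\sigma_\omega = 0$; their fusion number grows at least linearly in the number of summands, via the classical degree bound $f(K) \geq \tfrac{1}{2} \deg \Delta_K$ for ribbon knots (the Alexander polynomial is additive in degree under connected sum). Choosing enough summands yields $f(L_N) \geq N$. Combining in $K = K_M \# L_N$: we get $f(K) \geq N$ by additivity of Alexander polynomial degree and $f_h(K) = M$ from the two-sided argument above, since $\sigma_\omega(L_N) = 0$ contributes nothing to the signature bound.

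The main obstacle is the lower bound $f_h(K_M) \geq M$ via \Cref{Thm:sigbound}. Since half ribbon knots are slice, their Levine-Tristram signatures vanish at generic $\omega \in S^1 \setminus \{1\}$, so the signature bound is non-vacuous only at roots of the Alexander polynomial on the unit circle (where $\sigma_\omega$ is defined as a one-sided or averaged limit). Finding a building block $J$ whose Alexander polynomial has such a root with nonzero LT signature is the delicate step; once such $J$ is identified, the additivity properties of LT signature and Alexander polynomial degree under connected sum carry the rest of the argument through.
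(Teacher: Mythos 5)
Your overall architecture is the same as the paper's: connect-sum a doubly slice ribbon knot of large fusion number with \(M\) copies of a ribbon knot of half fusion number \(1\), then separate the two invariants using \Cref{Thm:sigbound}, subadditivity, and a lower bound on the fusion number that behaves well under connected sum. However, both of your key ingredients fail as stated. For the building block \(J\) you propose \(6_1\); but \(\Delta_{6_1}(t) = 2t^2 - 5t + 2\) has no roots on the unit circle, so \(\sigma_{\omega}(6_1) = 0\) for every \(\omega \in S^1 \smallsetminus \lbrace 1 \rbrace\) and \Cref{Thm:sigbound} yields only \(f_h(\#^M 6_1) \geq 0\); the ``not doubly slice'' observation gives \(f_h \geq 1\), never \(\geq M\). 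You correctly identify this as the delicate step but leave it unresolved. The paper resolves it by taking \(J = 8_{20}\), for which \(\sigma_{e^{\pi i/3}}(8_{20}) = 1\), so that additivity of the Levine--Tristram signature gives \(f_h(\#^M 8_{20}) \geq M\).

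The second gap is the fusion-number lower bound. The inequality \(f(K) \geq \tfrac{1}{2}\deg \Delta_K\) for ribbon knots is false: the square knot \(T_{2,3} \# \overline{T_{2,3}}\) has Alexander polynomial \((t^2 - t + 1)^2\) of breadth \(4\) but fusion number \(1\). Indeed, the doubly slice knots \(J \# \overline{J}\) you want to use are precisely those for which classical Alexander-type bounds on the fusion number are weak; this is why the paper invokes the knot Floer torsion order \(\mathrm{Ord}_v\) of Juh\'asz--Miller--Zemke, which gives \(f(T_{p,q} \# \overline{T_{p,q}}) = \min \lbrace p,q \rbrace - 1\) and satisfies a max formula under connected sum, so that the lower bound survives after summing with the \(8_{20}\) factors. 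Without a valid invariant of this kind your construction does not establish \(f(K) \geq N\), and one cannot simply add separate lower bounds for the summands since the fusion number is only known to be subadditive.
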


\begin{proof}
	Juha\'sz, Miller, and Zemke defined an invariant of \( 1 \)-knots using knot Floer homology, denoted \( \text{Ord}_v ( K ) \), and proved that it bounds the fusion number of ribbon \(1\)-knots from below \cite[Corollary 1.7]{Juhasz2020}. Denote by \( T_{p,q} \) the positive \( (p,q) \)-torus knot and let \( C_{p,q} = T_{p,q} \# \overline{T_{p,q}} \). It is established in \cite[Equation 1.7]{Juhasz2020} that
	\begin{equation*}
	\text{Ord}_v ( C_{p,q} ) = f ( C_{p,q} ) = \text{min} \left\lbrace p, q \right\rbrace - 1.
	\end{equation*}
	
	Notice that \( f_h ( C_{p,q} ) = 0 \) as \( C_{p,q} \) is doubly slice. Let \( K_M = C_{p,q} \# \left( \#^M 8_{20} \right) \). The \(1 \)-knot \( 8_{20} \) is chosen as for \( \omega = e^{\pi i / 3} \) we have \( \sigma_{\omega} ( 8_{20} ) = f_h ( 8_{20} ) = f ( 8_{20} ) = 1 \). Observe that \( M = \sigma_{ \omega } ( K_M ) \leq f_h ( K_M ) \) by \Cref{Thm:sigbound} and the additivity of the signature with respect to connected sum. That the half fusion number is subadditive with respect to connected sum implies that \( f_h ( K_M ) = M  \), in fact.
	
	Further, applying \cite[Equation 1.4]{Juhasz2020} yields
	\begin{equation*}
	\begin{aligned}
	\text{Ord}_v ( K_M ) &= \text{max} \left\lbrace \text{Ord}_v ( C_{p,q} ), \text{Ord}_v ( 8_{20} ) \right\rbrace \\
	&= \text{min} \left\lbrace p, q \right\rbrace - 1
	\end{aligned}
	\end{equation*}
	so that \( \text{min} \left\lbrace p, q \right\rbrace - 1 \leq f ( K_M ) \). A suitably large choice of \( p \) and \( q \) completes the proof.
\end{proof}

\Cref{Prop:fgaps} establishes that the set of ribbon \(2\)-knots divided by a \(1\)-knot \( K \) is in some sense distinct to the set of ribbon discs for \( K \).

Note that as \( g_{ds} ( 8_{20 }) = 1 \) and the double slice genus is subadditive the proof of \Cref{Prop:fgaps} shows that the difference \( f ( K ) - g_{ds} ( K ) \) can also be made arbitrarily large.

\section{Calculations}\label{Sec:computations}
The calculation of the slice genus of prime \(1\)-knots of up to \(12\) crossings has recently been completed, with input from a large number of authors \cite{Lewark2019,Piccirillo2020,Karageorghis2021,Brittenham2021}. Karageorghis and Swenton also calculated the double slice genus of all but \(68\) of these \(1\)-knots \cite{Karageorghis2021}, three of which were later determined by Brittenham and Hermiller \cite{Brittenham2021}.

In this section we calculate the half ribbon genus of every prime \(1\)-knot up to \(12\) crossings. Additionally, we compute the double slice genus in \(8\) of the \(65\) previously undetermined cases. We apply our methods to \(13\)-crossing \(1\)-knots, calculating the half ribbon genus of \(2156\) of them. In \(247\) such cases we are also able to determine the double slice genus.

\Cref{Sec:handles} describes how ribbon cobordisms between \(1\)-knots can be realised on surface-knots they divide, and \Cref{Sec:determining} gives the results of our calculations.

\subsection{Handle attachments defined by ribbon cobordisms}\label{Sec:handles}
Recent calculations of the slice and double slice genera have employed upper bounds obtained from various sequences of band attachments. Our calculations rely on the observation that if \( K \) divides a surface-knot \( S \), attaching bands to \( K \) may be realized by adding \(1\)-handles to \( S \).

\begin{theorem}\label{Thm:tubes1}
	Let \( K \), \( J \) be \(1\)-links, \( C \) a connected cobordism from \( K \) to \( J \) defined by attaching \( \ell \) bands to \( K \), and \( \overline{C} \) its reverse. Suppose that \( J \) divides a surface-knot \( S \) with \( S = S_+ \cup_J S_- \). Then
	\begin{enumerate}[(i)]
		\item The surface-knot \( S' = S_+ \cup C \cup_K \overline{C} \cup S_- \) is obtained from \( S \) by attaching \( \ell \) \(1\)-handles.
		\item If \( S_+ \) is a ribbon surface for \( J \) and \( \overline{S_-} = S_+ \) then \( S' \) is ribbon.
		\item If \( S \) is trivial then \( S' \) is trivial.
	\end{enumerate}
\end{theorem}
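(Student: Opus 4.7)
The plan is to establish (i) as the core technical statement and use it to derive (ii) and (iii).

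For (i), I would work by induction on \( \ell \). For the base case \( \ell = 1 \), set up a local model: in a small \(4\)-ball neighborhood \( B \) of the band \( b \) inside \( S^3_0 \times (-\epsilon, \epsilon) \), the cobordism \( C \) has one saddle, and \( \overline{C} \) (being the reverse) has the mirrored saddle. Their union \( C \cup_K \overline{C} \) restricted to \( B \) is precisely a \(3\)-dimensional \(1\)-handle attached to \( J \times I \), with core an arc dual to \( b \); outside \( B \) the cobordism agrees with \( J \times I \). Hence \( C \cup_K \overline{C} \) is isotopic rel boundary to \( J \times I \) with one \(3\)-dimensional \(1\)-handle attached. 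For \( \ell > 1 \), arrange the bands to occur at distinct heights in the cobordism (after a generic perturbation) and apply the single-band case iteratively, one pair of saddles at a time.

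For (ii), under the hypothesis \( \overline{S_-} = S_+ \) we have \( S' = (S_+ \cup C) \cup_K \overline{(S_+ \cup C)} \), so \( S' \) is the double of \( S_+ \cup C \). The surface \( S_+ \) has no index-\(2\) critical points for the radial height function (since it is a ribbon surface for \( J \)), and \( C \) contributes only index-\(1\) critical points (the saddles of the bands). Hence \( S_+ \cup C \) is a ribbon surface for \( K \), and its double \( S' \) is a ribbon surface-knot, as noted in the excerpt's discussion of doubles. For (iii), if \( S \) bounds a handlebody \( H \subset S^4 \), then by (i), \( S' \) is obtained from \( S \) by \( \ell \) three-dimensional \(1\)-handle attachments, each localized near a band \( b_i \subset S^3_0 \). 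I would show that, after an ambient isotopy of the pair \( (S^4, S) \), each \( b_i \) can be pushed rel its attaching arcs on \( J \) into a collar of \( S \) inside \( H \); the corresponding \(1\)-handle then extends \( H \) to a larger handlebody \( H' \) with \( \partial H' = S' \), establishing triviality.

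The main obstacle is part (i): rigorously identifying \( C \cup_K \overline{C} \) with a \(1\)-handle attachment on \( J \times I \) through the local model and carefully patching the local pictures together in the inductive step. Part (iii) also requires some care, since not every \(1\)-handle attachment to a trivial surface-knot produces a trivial result; one must exploit the specific form of the bands in \( S^3_0 \) and the handlebody structure on \( H \) to carry out the required isotopy into \( H \).
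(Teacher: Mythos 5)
Your treatment of (i) and (ii) matches the paper's: the bands of \( C \) pair with their mirrors in \( \overline{C} \) to give \(3\)-dimensional \(1\)-handles attached along \( J \times [0,1] \), and under the hypothesis of (ii) the surface \( S' \) is the double of the ribbon surface \( S_+ \cup C \), hence ribbon. The paper states (i) in one sentence; your local model and induction on \( \ell \) is a legitimate way to make that sentence precise.

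The gap is in (iii). Your premise that ``not every \(1\)-handle attachment to a trivial surface-knot produces a trivial result'' is false in the orientable setting (which is the paper's standing convention): a theorem of Hosokawa--Kawauchi, cited by the paper as \cite[Proposition 11.2]{Kamada2002}, says that attaching \emph{any} \(1\)-handle to a trivial orientable surface-knot yields a trivial surface-knot, so (iii) follows immediately from (i) with no positioning of the bands required. Your substitute argument is not complete as sketched: the claim that each band can be isotoped rel its attaching arcs into a collar of \( S \) inside the handlebody \( H \) is exactly the nontrivial content and is asserted without proof (the complement of \( S \) need not be simply connected, so the core of the handle cannot be freely homotoped into a prescribed position), and even granting it you would still need to check that the complementary region you obtain after carving out or adjoining the handle is again a handlebody, which fails for a knotted core. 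Unless you intend to reprove Hosokawa--Kawauchi, you should simply invoke it here.
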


\begin{proof}
	\noindent(i): The bands of \( C \) become \(1\)-handles in \( S' \), that may be thought of as being attached to the cylinder \( J \times [0,1] \) in \( S_+ \cup \left( J \times [0,1] \right) \cup S_- \).
	
	\noindent(ii): If \( S_+ \) is a ribbon surface for \( J \) then \( S_+ \cup C \) is a ribbon surface for \( K \), the double of which is a ribbon surface-knot.
	
	\noindent(iii): Attaching \( 1 \)-handles to a trivial surface-knot yields a trivial surface-knot \cite[Proposition 11.2]{Kamada2002}.
\end{proof}

The \emph{superslice genus}, \( g^{s} ( K ) \), of a \(1\)-knot \( K \) is the minimum genus of a slice surface for \( K \) the double of which is a trivial surface-knot \cite{Chen2021}.

\begin{corollary}\label{Cor:tubes1}
	Let \( K \), \( J \) be \(1\)-knots. Suppose that \( J \) is obtained from \( K \) via a sequence of \( \ell \) band attachments. Then
	\begin{enumerate}[(i)]
		\item \( g_{ds} ( J ) - \ell \leq g_{ds} ( K ) \leq g_{ds} ( J ) + \ell \).
		\item \( 2g^{s} ( J ) - \ell \leq 2g^{s} ( K ) \leq 2g^{s} ( J ) + \ell \).
		\item \( g_{hr} ( K ) \leq 2g_{r} ( J ) + \ell \).
	\end{enumerate}
\end{corollary}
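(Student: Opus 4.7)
The plan is to derive each of the three inequalities as a direct application of \Cref{Thm:tubes1}, choosing in each case an appropriate surface-knot divided by $J$ and invoking the corresponding part of the theorem. The common numerical ingredient is that a cobordism $C$ built from $\ell$ band attachments has $\chi(C) = -\ell$, so splicing $C \cup \overline{C}$ into a surface-knot drops its Euler characteristic by $2\ell$ and, assuming connectedness of the result, raises its genus by $\ell$.

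For part (iii), I would take a ribbon surface $F'$ for $J$ realising $g_r(J)$ and set $S_+ = F'$, $S_- = \overline{F'}$, so that $S = F' \cup_J \overline{F'}$ is a ribbon surface-knot divided by $J$ of genus $2 g_r(J)$. Part (ii) of \Cref{Thm:tubes1} then yields a ribbon surface-knot $S'$ divided by $K$, of genus $2 g_r(J) + \ell$ by the Euler characteristic count, giving the bound. For the right-hand inequality of (i), I apply parts (i) and (iii) of \Cref{Thm:tubes1} to a minimum-genus trivial surface-knot divided by $J$, obtaining a trivial surface-knot divided by $K$ of genus $g_{ds}(J) + \ell$. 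For the right-hand inequality of (ii), I start from a minimum-genus superslice surface $F$ for $J$ and take $S$ to be its double; parts (i) and (iii) of \Cref{Thm:tubes1} produce a trivial $S'$, and the construction exhibits $S'$ as the double of the slice surface $F \cup C$, so that $F \cup C$ is itself superslice, of genus $g^s(J) + \ell/2$.

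The left-hand inequalities of (i) and (ii) follow by symmetry. Band attachments between $1$-knots are reversible: if $J$ is obtained from $K$ by attaching bands $b_1, \dots, b_\ell$, then $K$ is obtained from $J$ by attaching dual bands in reverse order. Applying the already-established right-hand inequalities with the roles of $K$ and $J$ interchanged yields $g_{ds}(J) \leq g_{ds}(K) + \ell$ and $2g^s(J) \leq 2g^s(K) + \ell$, which rearrange to the desired lower bounds.

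The only subtlety I anticipate is verifying that the surface-knot $S'$ produced by \Cref{Thm:tubes1} is connected, so that ``genus of $S'$'' is unambiguous and the Euler characteristic count translates cleanly into a genus bound. This should be immediate from the construction, since $C$ and $\overline{C}$ are glued along the single knot $K$, preserving connectedness of the ambient surface.
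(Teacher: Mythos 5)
Your proposal is correct and follows essentially the same route as the paper: realise the bands as $1$-handles via \Cref{Thm:tubes1} applied to a minimal trivial surface-knot for (i), to the double of a minimal superslice surface for (ii), and to the double of a minimal ribbon surface for (iii), with the lower bounds obtained by reversing the roles of $K$ and $J$. The only cosmetic difference is that you make the Euler characteristic bookkeeping and the reversibility of band attachments explicit, where the paper states them more tersely.
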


\begin{proof}
	\noindent(i): Let \( J \) divide a trivial surface-knot \( S \) with \( g ( S ) = g_{ds} ( J ) \). By \Cref{Thm:tubes1} \( K \) divides a trivial surface-knot of genus \( g_{ds} ( J ) + \ell \), whence the rightmost inequality. Reversing the roles of \( K \) and \( J \) gives the leftmost.
	
	\noindent(ii): Let \( F \) be a slice surface for \( J \) with \( g ( F ) = g^{s} ( J ) \). Denote by \( C \) the cobordism defined by the sequence of \( \ell \) bands. Then \( F \cup C \) is a slice surface for \( K \) of genus \( g^{s} ( J ) + \frac{\ell}{2} \) (the number of bands is even as \( C \) is orientable with two boundary components). The double of \( F \cup C \) is a surface-knot of genus \( 2 g^{s} ( J ) + \ell \), and is trivial by \Cref{Thm:tubes1}. The rightmost inequality follows from the fact that \( K \) divides this surface-knot by construction. Reversing the roles of \( K \) and \( J \) gives the leftmost.
	
	\noindent(iii): Let \( F \) be a ribbon surface for \( J \) with \( g ( F ) = g_{r} ( J ) \). A doubling process similar to that given above shows that \( K \) divides a ribbon surface of genus \( 2 g_r ( K ) + \ell \).
\end{proof}

Picking \( J \) as the unknot in \Cref{Cor:tubes1} (i), (ii) recovers \cite[Theorem 3.1]{McDonald2019}. It is unknown if attaching a \(1\)-handle to a ribbon surface-knot preserves the ribbon property. This causes \Cref{Cor:tubes1} (iii) to be of a different form to \Cref{Cor:tubes1} (i), (ii). The result of attaching a \(1\)-handle \( h \) to a surface-knot \( S \) depends only on the homotopy class of the core, \( \gamma \), of \( h \) in the complement of \( S \) \cite[Proposition 5.1.6]{Kamada2017}. If \( S \) is ribbon it may be isotoped into a sphere-tube presentation. The trace of this isotopy is of codimension \(1\) so that it and \( \gamma \) generically intersect in points. It is therefore unclear if the isotopy can be completed in the presence of \(h\).

Let \(K\) and \(J\) be \(1\)-knots and \(C\) a cobordism between them. We say that \(C\) is a \emph{ribbon cobordism from \(K\) to \(J\)} if we do not encounter a birth of an unknotted and unlinked component when traversing \(C\) from \(K\) to \(J\). A \emph{ribbon concordance} is a ribbon cobordism of genus \( 0 \).

\Cref{Thm:tubes1} is similar to the following description of the union of a ribbon concordance with its reverse given by Zemke \cite{Zemke2019}\footnote{Our definition is consistent with the \emph{reverse} of what Zemke refers to as a ribbon concordance.}. If \( C \) is a ribbon concordance from \( K \) to \( J \) then \( C \cup_K \overline{C} \) is obtained from \( J \times [0,1] \) by taking a disjoint union with trivial \( 2 \)-knots and attaching them to \( J \times [0,1] \) via \(1\)-handles. This operation is known as taking a \emph{tube sum with trivial \(2\)-knots}.

This description can be combined in a straightforward manner with \Cref{Thm:tubes1} to obtain the following results. We expect these more general results to be useful in further study of the double slice and half ribbon genera.

\begin{theorem}\label{Thm:tubes2}
	Let \( K \), \( J \) be \(1\)-links, \( C \) a connected ribbon cobordism from \( K \) to \( J \) with \( s \) saddles and \( d \) deaths. If \( J \) divides a surface-knot \( S \) then \( K \) divides a surface-knot, \( S' \), obtained from \( S \) by attaching \( (s - d) \) \(1\)-handles to \( S \) and taking the tube sum with \( d \) trivial \(2\)-knots.
\end{theorem}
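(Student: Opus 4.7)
The plan is to reduce Theorem~\ref{Thm:tubes2} to a combination of Theorem~\ref{Thm:tubes1} and the Zemke-style description of $C \cup_K \overline{C}$ for a ribbon concordance, recalled in the paragraph preceding the statement. The idea is to decompose the ribbon cobordism $C$ into a pure band-attachment piece followed by a ribbon concordance piece, and apply each cited result to the corresponding piece.

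First I would decompose $C = C_2 \cup_{K'} C_1$, where $C_2$ is a cobordism from $K$ to a 1-knot $K'$ realised by $(s-d)$ band attachments (no deaths), and $C_1$ is a ribbon concordance from $K'$ to $J$ comprising $d$ saddles and $d$ deaths. Since $C$ has no births, a standard Morse-theoretic rearrangement of the radial height function allows all saddles to be placed before all deaths. Among the $s$ saddles there must be $(s+d)/2$ splits and $(s-d)/2$ fuses, and handle-slides permit a further reordering so that the first $(s-d)$ saddles consist of equal numbers of splits and fuses, keeping the circle count equal to one after their attachment to $K \times [0,1]$. The surface so produced has a single-component knot boundary $K'$, and the remaining $d$ saddles each split off a small trivial unknotted component, each subsequently killed by one of the $d$ deaths.

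Given this decomposition, the Zemke-style description identifies $C_1 \cup_{K'} \overline{C_1}$ with $J \times [0,1]$ tube-summed with $d$ trivial 2-knots. Inserting this cobordism in place of a collar of $J$ in $S$ produces a surface-knot $S''$, divided by $K'$ and isotopic to $S$ with $d$ trivial 2-knots tube-summed on. Applying Theorem~\ref{Thm:tubes1}(i) to $C_2$, the cobordism of $(s-d)$ band attachments, with $K'$ dividing $S''$ then yields a surface-knot $S'''$, divided by $K$ and obtained from $S''$ by attaching $(s-d)$ 1-handles. Associativity of the gluings identifies $S'''$ with $S' = S_+ \cup C \cup_K \overline{C} \cup S_-$, so composing the two descriptions gives $S'$ as $S$ with $(s-d)$ 1-handles attached and $d$ trivial 2-knots tube-summed on, with $K$ dividing $S'$ by construction.

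The main obstacle lies in the Morse-theoretic decomposition: one must verify that the radial height function on $C$ can be deformed so that the intermediate level between $C_2$ and $C_1$ is a single-component knot $K'$, and so that the remaining $d$ saddles really do split off trivially capped unknots. While permuting 1-handles via handle-slides and exchanging distant saddles with deaths is folklore for cobordisms without births, writing out that the specific combinatorial ordering can be realised topologically is the delicate part; everything else is a mechanical combination of the two cited results.
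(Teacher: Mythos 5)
Your proposal is correct and follows essentially the same route as the paper: the paper's (much terser) proof likewise rearranges the Morse function so that, read in reverse from \( J \) to \( K \), one first sees the creation of a \( d \)-component unlink and the saddles fusing it into the knot (your ribbon-concordance piece \( C_1 \), doubling to the \( d \) tube sums) followed by the remaining \( (s-d) \) saddles (your \( C_2 \), doubling to the \(1\)-handles as in Theorem~\ref{Thm:tubes1}(i)). The Morse-theoretic reordering you flag as the delicate point is exactly the step the paper asserts with ``up to isotopy we may assume,'' so your write-up is, if anything, more explicit than the original.
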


\begin{proof}
	The saddles of \( C \) split into two types. Up to isotopy we may assume that as we move in reverse through \( C \) (from \(J\) to \(K\)) we first see the creation of a \( d \)-component unlink, the components of which are then joined together by bands to produce a \(1\)-knot. The saddles of \( C \) associated to these bands yield the tube sums that contribute to \( S' \). The remaining \( (s - d) \) saddles of \( C \) yield the \(1\)-handles attached to \( S \). A schematic example is given in \Cref{Fig:tubes}.
\end{proof}

\begin{figure}
	\includegraphics[scale=0.75]{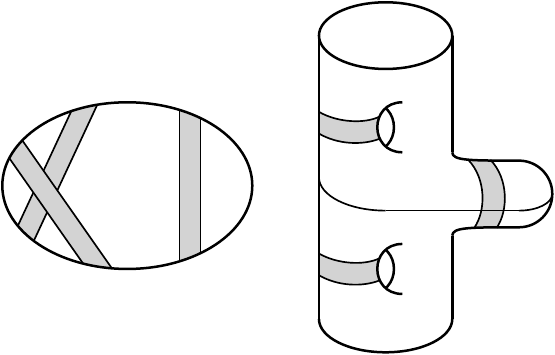}
	\caption{On the left: bands defining a ribbon cobordism \( C \). On the right: a schematic of \( C \cup \overline{C} \) (some handles have been isotoped away from the equator for aesthetic purposes.)}
	\label{Fig:tubes}
\end{figure}

\begin{corollary}\label{Cor:tubes2}
	Suppose that there is a ribbon cobordism \( C \) from \( K \) to \( J \) with \( s \) saddles and \( d \) deaths. Then
	\begin{enumerate}[(i)]
		\item \( g_{ds} ( J ) - s \leq g_{ds} ( K ) \leq g_{ds} ( J ) + s \).
		\item \( g_{hr} ( K ) \leq 2g_{r} ( J ) + s - d \).
	\end{enumerate}
\end{corollary}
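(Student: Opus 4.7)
The plan is to apply \Cref{Thm:tubes2} in the forward direction (starting from a surface-knot divided by \( J \)). A key simplification for a ribbon \emph{concordance} is that \( \chi ( C ) = 0 \), which combined with the ribbon condition (no births going up) forces \( s = d \). Hence the surface-knot \( S' \) produced by \Cref{Thm:tubes2} is built from \( S \) using only tube sums with \( d \) trivial \(2\)-knots, with no extra \(1\)-handles.

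For the upper bound in (i), I would take a trivial surface-knot \( S \) divided by \( J \) with \( g ( S ) = g_{ds} ( J ) \) and apply \Cref{Thm:tubes2}. Attaching \(1\)-handles to a trivial surface-knot yields a trivial surface-knot \cite[Proposition 11.2]{Kamada2002}, and tube sums with trivial \(2\)-knots may be placed in standard position and hence preserve triviality. Thus \( S' \) is trivial of genus \( g_{ds} ( J ) + ( s - d ) \leq g_{ds} ( J ) + s \). For the lower bound the reverse \( \overline{C} \) is not a ribbon cobordism (\( C \)'s \( d \) deaths become births on reversal), so \Cref{Thm:tubes2} cannot be reversed directly. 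Instead, I would use the decomposition of \( C \) from the proof of \Cref{Thm:tubes2}: after isotopy, \( C \) presents first \( s \) saddles taking \( K \) to \( J \sqcup U \) (with \( U \) a \( d \)-component unlink split from \( J \)), followed by \( d \) deaths capping off \( U \). Given \( K \) dividing a trivial \( S_K \) with \( g ( S_K ) = g_{ds} ( K ) \), applying a multi-component version of \Cref{Thm:tubes1} to the first \( s \) saddles produces \( J \sqcup U \) dividing a trivial \( S_K' \) of genus \( g_{ds} ( K ) + s \). Each component \( U_i \subset S_K' \cap S^3_0 \) sits in an annulus neighborhood in \( S_K' \); pushing this annulus into \( B^4_+ \) eliminates \( U_i \) from the equatorial intersection while preserving the ambient isotopy class of \( S_K' \) (and in particular its triviality). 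The result is a trivial surface-knot divided by \( J \) of genus \( g_{ds} ( K ) + s \), yielding \( g_{ds} ( J ) \leq g_{ds} ( K ) + s \).

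For (ii), let \( F \) be a ribbon surface for \( J \) with \( g ( F ) = g_r ( J ) \), and let \( S \) be its double---a ribbon \(2\)-knot of genus \( 2 g_r ( J ) \) divided by \( J \). \Cref{Thm:tubes2} produces \( K \) dividing a surface-knot \( S' \) built from \( S \) by attaching \( s - d = 0 \) \(1\)-handles and taking tube sums with \( d \) trivial \(2\)-knots. Tube sums of ribbon surface-knots preserve the ribbon property, since their sphere-tube presentations combine directly. Hence \( S' \) is a ribbon surface-knot of genus \( 2 g_r ( J ) + ( s - d ) \) divided by \( K \), giving the stated bound. Note that this route circumvents the difficulty flagged after \Cref{Cor:tubes1}(iii): because \( s = d \), no genuine \(1\)-handles are added to the ribbon \(2\)-knot \( S \).

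The principal obstacle is the lower bound of (i): formulating a multi-component version of \Cref{Thm:tubes1} (as stated, both endpoints are \(1\)-knots), and executing the push-off isotopy for each \( U_i \) without disturbing the intersection along \( J \) or the triviality of \( S_K' \).
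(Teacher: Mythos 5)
Your observation that \( s = d \) for a ribbon concordance (an annulus with no births has \( \chi = -s+d = 0 \)) is correct, and your treatment of (ii) essentially matches the paper's: the double of a minimal ribbon surface for \( J \) is already in sphere-tube position, so the handles and tube sums supplied by \Cref{Thm:tubes2} simply enlarge that sphere-tube presentation. The genuine gap is in the upper bound of (i). You assert that the tube sums with trivial \(2\)-knots ``may be placed in standard position and hence preserve triviality,'' concluding that \( S' \) is trivial of genus \( g_{ds}(J)+(s-d) \). This cannot be right: the isotopy class of the result of a tube sum depends on the core arcs of the connecting \(1\)-handles, which are dictated by the concordance and cannot be standardised without changing \( S' \). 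Indeed, since \( s = d \) your bound reads \( g_{ds}(K) \leq g_{ds}(J) \); taking \( J \) to be the unknot and \( C \) a punctured ribbon disc, this would force every ribbon \(1\)-knot to be doubly slice, which is false (e.g.\ \( 6_1 \)). The paper instead pays a genus price for the tube sums: it first joins the trivial \(2\)-knots to the (trivial) result of the \( (s-d) \) handle attachments using \emph{trivial} \(1\)-handles placed away from the equator, producing a single connected trivial surface-knot with unchanged genus and cross-section, and only then attaches the \( d \) tube-sum handles, each of which now raises the genus by one while preserving triviality by \cite[Proposition 11.2]{Kamada2002}. That is exactly why the stated bound is \( g_{ds}(J)+s \) rather than \( g_{ds}(J)+s-d \).

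A secondary point: your route to the lower bound of (i) --- a multi-component version of \Cref{Thm:tubes1} followed by pushing the unlink components of the cross-section off the equator --- is not carried out, and the push-off is not innocuous: the collar of each \( U_i \) in \( B^4_- \) is attached there to the rest of the surface, so pushing it across the equator will generically create new intersection curves rather than simply deleting \( U_i \). You rightly flag this as the weak point. (For what it is worth, the paper's own proof of this corollary only argues the upper bound of (i) explicitly.)
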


\begin{proof}
	\noindent(i): Suppose that \( J \) divides a trivial surface-knot \( S \) with \( g ( S ) = g_{ds} ( J ) \). By \Cref{Thm:tubes2} \( K \) divides a surface-knot, \( S' \), obtained from \( S \) by attaching \( (s - d) \) \(1\)-handles to \( S \) and taking \( d \) tube sums with trivial \(2\)-knots. Denote by \( S_1 \) the result of attaching the \( (s - d) \) \(1\)-handles to \( S \). As \( S \) is trivial \( S_1 \) is trivial, and \( g ( S_1 ) =  g ( S ) + s - d \).
	
	Denote by \( T \) the set of trivial \(2\)-knots that will be tube summed to \( S_1 \) to produce \( S' \). A \( 1 \)-handle is \emph{trivial} if it bounds an embedded \( D^1 \times D^2 \). As the \( d \) components of \( T \) are formed by doubling a ribbon cobordism they may be connected with \( (d - 1) \) trivial \(1\)-handles to produce a single trivial \(2\)-knot, \( T'\), without altering the equatorial cross-section. We may add an additional trivial \(1\)-handle between \( S_1 \) and \( T' \) to produce a trivial surface-knot, \( S_2 \), also without altering the cross-section. Notice that \( g ( S_2 ) = g ( S ) + s - d \).
	
	Finally, consider the set of \( d \) \(1\)-handles defined by the tube sums that produce \( S' \). We may add these \( 1 \)-handles to \( S_2 \) (as it includes both \( S \) and \( T \)) to produce a trivial surface-knot, \( S_3 \), of genus \( g ( S_2 ) + d =  g_{ds} ( J ) + s \). That the trivial \(1\)-handles above are attached without altering the cross-section ensures that \( K \) divides \( S_3 \).
	
	\noindent(ii): Let \( F \) be a ribbon surface for \( J \) with \( g ( F ) = g_{r} ( J ) \). Then \( J \) divides the genus \( 2g_{r} ( J ) \) surface-knot \( S \) with sphere-tube presentation given by the double of \( F \). By \Cref{Thm:tubes2} \( K \) divides a surface-knot \( S ' \) obtained from \( S \) by attaching \(1\)-handles and taking tube sums with trivial \(2\)-knots. Thus \( S' \) is ribbon as it also has a sphere-tube presentation, and \( g ( S ' ) = 2g_{r} ( J ) + s - d \) (only the \( (s - d) \) \(1\)-handles attached to \( S \) affect the genus of \(S'\)).
\end{proof}

Picking \( J \) as the unknot in \Cref{Cor:tubes2} (i) recovers \cite[Theorem 3.2]{McDonald2019}.

\subsection{Determining genera}\label{Sec:determining}
As employed by Lewark-McCoy and Brittenham-Hermiller the operations of switching a crossing, switching a pair of crossings of zero writhe, and taking the oriented resolution at two crossings are all realizable by attaching two bands \cite[Lemma 5]{Lewark2019}.

To calculate the half ribbon genus we combine specific examples of these operations found by Lewark-McCoy and Brittenham-Hermiller, calculations of the double slice genus by Karageorghis-Swenton, and \Cref{Cor:tubes1}.

For the \(1\)-knots that Lewark-McCoy and Brittenham-Hermiller do not provide a suitable operation we made an independent computer search for crossing changes to ribbon or doubly slice \(1\)-knots.

\begin{proof}[Proof of \Cref{Thm:comp}]
The slice-ribbon conjecture has been verified up to \(12\) crossings. Therefore \( 2 g_4 ( K ) = g_{hr} ( K ) = 0 \) for all slice \(1\)-knots.

If \( K \) is not slice and \( g_{ds} ( K ) \) was determined prior to this work then \( 2 g_4 ( K ) = g_{ds} ( K ) \) \cite{knotinfo}, so that \( 2 g_4 ( K ) = g_{hr} ( K ) \) also by \Cref{Prop:bounds}.

This leaves \(58\) cases of undetermined half ribbon genus, as described in \Cref{Tab:genera} (\(7\) of the \(65\) cases undetermined by Karageorghis and Swenton are slice). These \(1\)-knots are obtainable from a ribbon \(1\)-knot by attaching two bands as shown by Lewark-McCoy \cite[Appendix A]{Lewark2019}, Brittenham-Hermiller \cite[Section 4]{Brittenham2021}, or our crossing change search. Therefore these \(1\)-knots have half ribbon genus equal to \(2\) by \Cref{Cor:tubes1} and \Cref{Prop:bounds} (recall that they are not slice). An example of this process in the case of \( 10_{74} \) is given in \Cref{Fig:bounds}.

Finally, as depicted in \Cref{Tab:genera} the \(1\)-knots \( 9_{37} \), \( 10_{74} \), \( 11n148 \), \( 12a554 \), \( 12a896 \), \( 12a921 \), \( 12a1050 \), \( 11n148 \), \( 12n554 \) are obtainable from a doubly slice \(1\)-knot by attaching two bands, so that they have double slice genus \( 2 \) by \Cref{Cor:tubes1} (they were shown to have double slice genus \(2\) or \(3\) by Karageorghis and Swenton).
\end{proof}

\begin{figure}
	\includegraphics[scale=0.75]{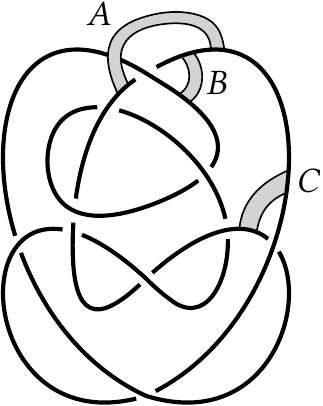}
    \caption{A diagram of the \(1\)-knot \( 10_{74} \), together with three bands. Attaching the bands labelled \(A\) and \(B\) realises a crossing change that yields a diagram of the \(1\)-knot \( 9_{46} \). Subsequently attaching the band labelled \(C\) defines a ribbon disc for \( 9_{46} \).}
    \label{Fig:bounds}
\end{figure}

This leaves \( 57 \) prime \(1\)-knots of up to \(12\) crossings with unknown double slice genus.

Our methods extend fruitfully into \(13\)-crossing \(1\)-knots. Specifically, we restricted to \(13\)-crossing \(1\)-knots of signature \(2\) and searched for crossing changes to ribbon or doubly slice \(1\)-knots. This allows us to show that \(2156\) such \(1\)-knots have half ribbon genus \(2\), of which \(247\) have double slice genus \(2\). The full results of these calculations are provided \href{https://www.mas.ncl.ac.uk/william.rushworth/ccdata.html}{on this webpage} and \href{https://arxiv.org/abs/2209.15577}{on the arXiv}.

Just as this paper studies \(1\)-knots that appear as cross-sections of ribbon \(2\)-knots, one could study those that appear as cross-sections of \emph{homotopy ribbon} \(2\)-knots. In addition to being possibly distinct to half ribbon in the smooth category, such a definition extends to the topological category. The most basic question one might ask in this setting is as follows.

\begin{question}
	Is every topologically slice \(1\)-knot the cross-section of a homotopy-ribbon \(2\)-knot?
\end{question} 

Finally, although we do not pursue it here, the notion of half ribbon genus can readily be extended allow for knotted surfaces of more than one component, as is done for the double slice genus \cite[Equation 1]{Conway2022}. There are natural generalizations of \Cref{Thm:tubes1,Thm:tubes2} to this setting.

\subsubsection*{Acknowledgements} We thank an anonymous referee for their valuable comments. This work was completed during the Fields Undergraduate Summer Research Program 2022. It is our pleasure to thank the Fields Institute for its support and hospitality. KnotInfo was an essential tool during the completion of this work \cite{knotinfo}. H.\ U.\ Boden was partially funded by the Natural Sciences and Engineering Research Council of Canada.

\begin{table}[h]
	\caption{The second and third columns list the result of attaching two bands, as given by \cite[Appendix A]{Lewark2019} and \cite[Section 4]{Brittenham2021}, respectively. The fourth column lists the result of a crossing change, found by our computer search. If we were able to calculate a previously unknown value of \( g_{ds} \) it is listed in the fifth column.}\label{Tab:genera}
	\begin{tabular}{c c}
	{\begin{minipage}{0.49\textwidth}
		\csvreader[head to column names,tabular=|c|c|c|c|c|,table head=\hline Knot & L-M & B-H & C.c. & \( g_{ds} \)  \\\hline,table foot=\hline]{gds_ghf_1.csv}%
		{}%
		{\knot & \lm & \bh & \cc & \gds}
	\end{minipage}}
	&
	{\begin{minipage}{0.49\textwidth}
		\csvreader[head to column names,tabular=|c|c|c|c|c|,table head=\hline Knot & L-M & B-H & C.c. & \( g_{ds} \)  \\\hline,table foot=\hline]{gds_ghf_2.csv}%
		{}%
		{\knot & \lm & \bh & \cc & \gds}
	\end{minipage}}
\end{tabular}
\end{table}

\bibliographystyle{alpha}
\bibliography{library}
\newpage

\end{document}